\newcommand{\mtilde}{\tilde M}
\newcommand{\ntilde}{\tilde N}
\newcommand{\ltilde}{\tilde L}
\newcommand{\Z}{\mathbb Z}
\newcommand{\LL}{\mathcal L}
\newcommand{\Sm}{\mathcal S}
\newcommand{\R}{\mathbb R}
\renewcommand{\phi}{\varphi}
\newcommand{\lk}{\operatorname{lk}}
\newcommand{\lktilde}{\widetilde{lk}}
\newcommand{\slktilde}{\widetilde{slk}}
\newcommand{\sign}{\operatorname{sign}}
\DeclareMathOperator{\TOP}{{TOP}}
\newcommand{\rp}{\R P}
\newcommand{\la}{\langle}
\newcommand{\ra}{\rangle}
\newcommand{\conn}{\mathbin{\#}} % or {\,{\hash}\,}
\newcommand{\spinc}{\ifmmode{\operatorname{Spin}^c}\else{$\operatorname{spin}^c$\ }\fi}
\newtheorem{theorem}{Theorem}[section]
\newtheorem{lemma}[theorem]{Lemma}
\newtheorem{proposition}[theorem]{Proposition}
\newtheorem{thm}{Theorem}
\theoremstyle{definition}
\newtheorem{definition}[theorem]{Definition}
\newtheorem{remark}[theorem]{Remark}
\title{Topological spines of 4-manifolds}
\thanks{The second author was partially supported by NSF Grant DMS-1811111.}
\author[Hee Jung Kim]{Hee Jung Kim}
\address{Department of Mathematics, MS 9063\newline\indent Western Washington University\newline\indent Bellingham, WA 98225}
\email{\rm{heejungorama@gmail.com}}
\author[Daniel Ruberman]{Daniel Ruberman}
\address{Department of Mathematics, MS 050\newline\indent Brandeis
University \newline\indent Waltham, MA 02454}
\email{\rm{ruberman@brandeis.edu}}
\subjclass[2000]{57N13 (primary), 57R67, 57Q40 (secondary)}
\begin{document}
\begin{abstract}
We show that infinitely many of the simply connected $4$-manifolds constructed by Levine and Lidman that do not admit PL spines actually admit topological spines. 
\end{abstract}
\maketitle
\section{Introduction}
%%%%%%%%%%%%%%%%%%%%%%%%%%%%%%%%%%%%%%%%%%%%%%
A remarkable recent paper of Levine and Lidman~\cite{levine-lidman:spineless} gives examples $X$ of smooth (equivalently, PL) simply connected $4$-manifolds, homotopy equivalent to a $2$-sphere, that do not admit PL spines. In other words, there is no PL (not necessarily locally flat) embedded sphere $S$ in $X$ that is a strong deformation retract of $X$. (See~\cite{cappell-shaneson:spineless,matsumoto:spineless,matsumoto:wild} for results in higher dimensions and about spines for non-simply connected manifolds). In this note we show that an infinite family of the Levine-Lidman examples (those whose intersection form is $\langle 4 \rangle$) admit a (tame) {\em topological spine}; this is a {\em locally} PL sphere that is a strong deformation retract of $X$.   

Finding a spine for a compact manifold (especially in codimension $2$) is typically phrased as a problem of homology surgery~\cite{cappell-shaneson:surgery}, as one is looking for a suitable complement for a regular neighborhood of the spine. The complement would be a homology cobordism between the boundary of this regular neighborhood and the boundary of $X$.  Indeed, the argument of Levine and Lidman is to find a $4$-manifold $X$, homotopy equivalent to a $2$-sphere, such that $\partial X$ is not smoothly homology cobordant to integral surgery on any knot in the $3$ sphere. In contrast, we show that for a family of their examples, $\partial X$ is {\em topologically} homology cobordant to integral surgery on a knot in the $3$ sphere. With some additional information about the fundamental group of the homology cobordism, this is sufficient to build the spine.

To state our main theorem, we use the notation of~\cite{levine-lidman:spineless} where $\{W_p\mid p \in \Z\}$ denotes a family of oriented smooth $4$-manifolds, each homotopy equivalent to a sphere, where a generator of $H_2(W_p)$ has self-intersection $4$.  For $p \not \in \{-2, -1, 0\}$,  the manifold $W_p$ has no PL spine~\cite[Proposition 3.3]{levine-lidman:spineless}.
\begin{thm}\label{thm:spine}
For each $p$, the manifold $W_p$ contains a tame topological spine.
\end{thm}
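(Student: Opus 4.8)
The plan is to realize $W_p$, up to homeomorphism, as a tubular neighborhood of a locally flat $2$-sphere glued to a topological homology cobordism, so that the core sphere is manifestly a spine. The underlying principle is that a spine $S$ of $W_p$ would generate $H_2(W_p)$ with self-intersection $4$, so a PL spine is equivalent to a \emph{smooth} homology cobordism from $\partial W_p$ to $S^3_4(K)$ for some $K\subset S^3$ (the boundary of the trace $X_4(K)=B^4\cup_K h^2$ of $4$-framed surgery on $K$); Levine and Lidman obstruct this using Heegaard Floer $d$-invariants, which are invariants of smooth, but not topological, homology cobordism — indeed, by Freedman, every integral homology sphere is topologically, though not smoothly, homology cobordant to $S^3$. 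So I would instead construct a purely \emph{topological} homology cobordism $V$ from $\partial W_p$ to $S^3_4(K)$ with $K$ topologically slice.

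First I would unwind the handle presentation of $W_p$ from~\cite{levine-lidman:spineless} and isolate the part of it that is topologically, but not smoothly, standard — I expect this to be governed by a knot of trivial Alexander polynomial together with a smoothly exotic homology sphere that nonetheless bounds a contractible topological $4$-manifold. Applying Freedman's disk embedding theorem to this part, i.e.\ finding a locally flat disk where no smooth one exists, one can replace the offending pieces by topologically standard ones and, after recombining, obtain a topological homology cobordism $V$ from $Y_p:=\partial W_p$ to $S^3_4(K)$ for some topologically slice $K\subset S^3$ — one might even aim for $K$ unknotted, so that $S^3_4(K)=L(4,1)$. Crucially, one keeps track of fundamental groups throughout and arranges that the inclusion of the $S^3_4(K)$-end into $V$ is surjective on $\pi_1$; this is the ``additional information about the fundamental group of the homology cobordism'' alluded to in the introduction.

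With such a $V$, set $Z:=X_4(K)\cup_{S^3_4(K)}V$. Since $K$ is topologically slice it bounds a locally flat disk $\Delta\subset B^4$, so $S:=\Delta\cup(\text{core of }h^2)$ is a locally flat $2$-sphere of self-intersection $4$ in $X_4(K)\subset Z$. A Mayer--Vietoris computation gives $H_*(Z)\cong H_*(S^2)$ with intersection form $\la 4\ra$, and van Kampen together with the $\pi_1$-control on $V$ gives $\pi_1(Z)=1$; hence $S\hookrightarrow Z$ is a homotopy equivalence. As a locally flat surface, $S$ has a topological tubular neighborhood, so $(Z,S)$ is a cofibration pair, and therefore $S$ is a strong deformation retract of $Z$ — a tame topological spine of $Z$.

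It remains to identify $Z$ with $W_p$. Both are simply connected topological $4$-manifolds with boundary $Y_p$ and intersection form $\la 4\ra$, so the closed manifold $N:=Z\cup_{Y_p}(-W_p)$ has intersection form $\la 4\ra\oplus\la -4\ra$, which is even; hence $N$ is spin, and since it has signature $0$ its Kirby--Siebenmann invariant vanishes. As these invariants add under gluing and $W_p$ is smooth, $Z$ also has vanishing Kirby--Siebenmann invariant. Boyer's classification of simply connected topological $4$-manifolds with a fixed boundary then produces a homeomorphism $Z\to W_p$, carrying $S$ to a tame topological spine of $W_p$. I expect the second step — the construction of $V$ with the required control on fundamental groups — to be the main obstacle: this is where the special structure of the Levine--Lidman examples is used, Freedman's techniques are indispensable (the homology cobordism is necessarily non-smoothable, by the very result being circumvented, so it is built from Freedman handles rather than honest ones), and the $\pi_1$-bookkeeping needed to keep the final gluing simply connected is delicate.
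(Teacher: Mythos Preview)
Your overall framework matches the paper's: build a topological homology cobordism $V$ from $\partial W_p$ to $S^3_4(K)$ for some knot $K$, glue on the trace $X_4(K)$, check simple connectivity via $\pi_1$-control on $V$, and then invoke Boyer's classification to identify the result with $W_p$. The reduction ``$Y_p$ bounds a contractible topological $4$-manifold, so it suffices to handle $Q_m$'' is also exactly what the paper does.

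The genuine gap is your construction of $V$ for the $Q_m$ piece. You write that you ``expect this to be governed by a knot of trivial Alexander polynomial together with a smoothly exotic homology sphere,'' and propose to apply Freedman's disk theorem directly. But nothing in the Levine--Lidman handle description of $Q_m$ exhibits it in that form: $Q_m$ is $(-1,0)$-surgery on a two-component link one of whose components is a $(2,2p+1)$ torus knot, and there is no Alexander-polynomial-$1$ component to slice away. The paper's entire technical content (Sections~\ref{S:degree} and~\ref{S:surgery}) is devoted to producing $V$, and it does \emph{not} proceed by finding topologically slice disks. Instead it constructs a degree-one normal map $f\colon S^3_4(K_p)\to Q_m$ that is a $\Z[G_m]$-homology equivalence (this already requires a delicate manipulation of twisted linking numbers to kill the homology of the $|G_m|$-fold cover), shows $f$ is normally cobordant to $\id_{Q_m}$ via the surgery exact sequence and the computation $[Q_m,G/\TOP]\cong\Z_2$, then modifies $K_p$ once more to kill the surgery obstruction in $L_4^h(\Z[G_m])$, and finally invokes Freedman's surgery theorem (valid since $G_m$ is finite, hence good) to complete the normal cobordism to a homotopy $Q_m\times I$. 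The $\pi_1$-control you correctly identify as essential comes for free from this: $V$ is homotopy equivalent to $Q_m\times I$, and $S^3_4(k_p)\hookrightarrow V$ is $\pi_1$-surjective because the degree-one map already was.

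Two smaller points. First, you aim for a \emph{locally flat} spine by asking that $K$ be topologically slice; the paper settles for a spine that is locally flat away from one cone point (a cone on the knot $k_p$), which is still tame in the sense of Definition~\ref{D:spine} and avoids the extra sliceness hypothesis. Second, your Boyer argument via the Kirby--Siebenmann invariant of the double is fine in spirit, but the paper's route is shorter: since $H_1(\partial W_p)\cong\Z_4$ is cyclic, Boyer's invariant set $B^t_L(\partial W_p)$ is a single point, so the homeomorphism is immediate.
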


The paper~\cite{levine-lidman:spineless} also includes examples of spineless $4$-manifolds whose intersection forms are $\langle \pm k^2 \rangle$, for any non-zero $k$. It is natural to ask if these manifolds have topological spines; at present our method does not seem to extend to values of $k$ greater than $2$. We will comment on this further at the end of the paper.  We remark that a recent paper of Hayden and Piccirillo~\cite{hayden-piccirillo:spines} contains examples of smooth/PL manifolds homotopy equivalent to a $2$-sphere of arbitrary self-intersection with no PL spine but which contain a topological spine. 
\subsection*{Acknowledgments}
Thanks to Adam Levine and Tye Lidman for interesting correspondence, and to Patrick Orson for a useful discussion on some fine points of surgery theory.  The second author was partially supported by NSF Grant DMS-1811111.

\section{Preliminaries}
First, we give the definition of a topological spine, following~\cite{pedersen:spines}.
\begin{definition}\label{D:spine}
A spine of a compact topological manifold $W$ with non-empty boundary is a tamely embedded complex $S \subset W$ so that $S$ is a strong deformation retract of $W$ and $S \hookrightarrow W$ is a simple homotopy equivalence. A PL spine in a PL manifold $W$ is a spine that is a subcomplex.
\end{definition}
In this paper, `spine' will always mean the topological version, unless specified that it is to be PL. We will be looking for a rather simple sort of spine in a manifold $W$ homotopy equivalent to a $2$-sphere, whose intersection form is $\langle n \rangle$, for some integer $n$. The complex $S$ will be a $2$-sphere that is locally flat, except near a single point, where it is a cone on a knot $K$. Such an $S$ has a neighborhood $N(S)$ homeomorphic to $X_4(K)$, where $X_n(K)$ denotes the $4$-manifold obtained by attaching an $n$-framed $2$-handle to the $4$-ball along $K$. We use the standard notation $S^3_n(K)$ for $\partial X_n(K)$.

We briefly recall the construction of Levine and Lidman. For any integer $m$, let $Q_m$ denote the orientable (as a manifold) circle bundle over $\rp^2$ with twisted Euler class $m$. We assume throughout that $m = -4p-3$ for an integer $p$; note that $m$ being odd implies that $H_1(Q_m) \cong \Z_4$.  The fundamental group $G_m$ of $Q_m$ has order $4\cdot |m|$ and in fact is a semi-direct product
\begin{equation}\label{E:Gm}
1 \to \Z_{|m|} \to G_m  \to \Z_4 \to 1
\end{equation}
with a generator of $\Z_4$ acting on the subgroup $ \Z_{|m|}$ by multiplication by $-1$.

A number of surgery diagrams for $Q_m$ are given in~\cite{levine-lidman:spineless} to which we add (the well-known) variation below; the box represents $p$ full twists.

\begin{figure}[htb]
\labellist
\small\hair 2pt
\pinlabel {$p$} [ ] at 50 190
\pinlabel {$-1$} [ ] at 20 330
\pinlabel {$0$} [ ] at -18 255
\pinlabel {$K$} [ ] at 133 255
\pinlabel {$J$} [ ] at 137 80
\endlabellist
\centering
\includegraphics[scale=0.5]{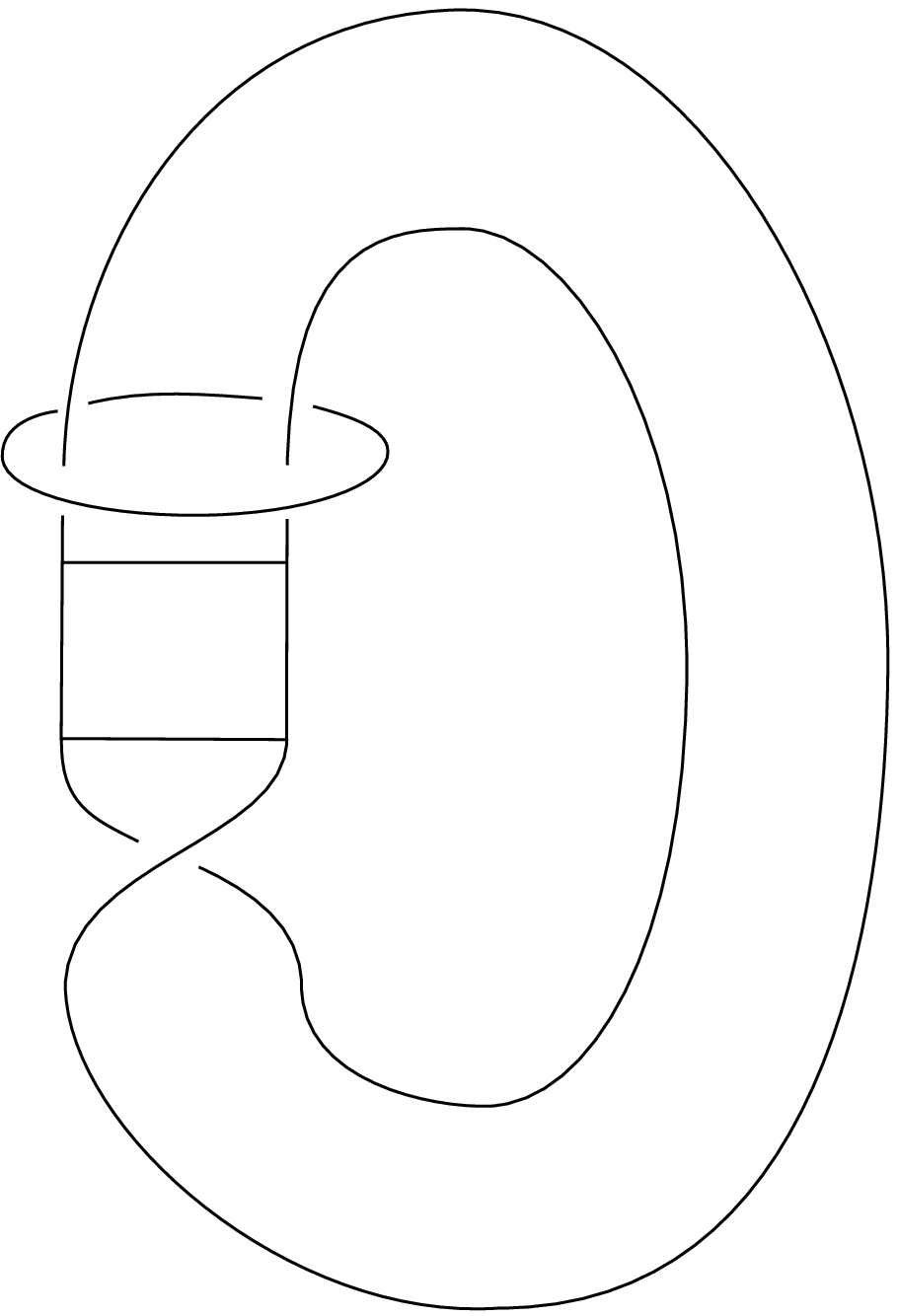}
\caption{Circle bundle $Q_{-4p-3}$}
\label{F:Qm}
\end{figure}
Note that for $p=0, -1$ the component labeled $J$ can be blown down to yield that $Q_m$ is surgery on a knot in the $3$-sphere. (It is not clear what happens for $p=-2$.) More generally, $Q_m$ is $+4$ framed surgery on a knot in $Y_p$ drawn in~\cite[Figure 2]{levine-lidman:spineless}. 
The manifold $W_p$ of Levine-Lidman has boundary $M_p = Q_{m} \conn  -Y_p$, where $Y_p$ is a certain Brieskorn homology sphere. The homology sphere summand is important to the construction of $W_p$ but will not play much of a role for us. 
\begin{remark}
We follow the convention of~\cite{levine-lidman:spineless} by indexing some objects by $p$ and others by $m$. 
\end{remark}

\subsection{Twisted linking numbers}\label{s:twisted}
Suppose that $M$ is an oriented $3$-manifold, and that $p: \mtilde \to M$ is a regular covering space with covering group $\pi$. Suppose further that $H_1(\mtilde) =0$. Suppose that $L_1$ and $L_2$ are null-homotopic knots in $M$. Then we can consider linking numbers $\lk_{\mtilde}$ between lifts of $L_1$ and $L_2$. Fixing a particular lift $\ltilde_1$ and $\ltilde_2$ of each component, we can form a generating function
$$
\lktilde(L_1,L_2) = \sum_{g \in \pi} \lk_{\mtilde}(\ltilde_1,\ltilde_2\cdot g)g.
$$
Note that by invariance of linking numbers under (orientation-preserving) homeomorphisms, the function $\lktilde$ determines linking numbers between arbitrary lifts of $L_1$ and $L_2$.  More precisely, $\lk_{\mtilde}(\ltilde_1\cdot h,\ltilde_2\cdot g) = \lk_{\mtilde}(\ltilde_1,\ltilde_2\cdot gh^{-1})$.

If $(L,\phi)$ is a framed knot in $M$, then we define the self-linking $\slktilde(L;\phi)$ by choosing $L_1 = L$ and $L_2$ to be the push-off $L^\phi$ defined by the framing. The constant term in this generating function is the framing of $\ltilde$. More precisely, the choice of lift $\ltilde$ determines a lift $\ltilde^\phi$, which is the push-off of $\ltilde$ by the lifted framing. Since $L$ and $\ltilde$ are null-homologous, the framings can be characterized by integers $n$ and $n'$.  Suppose that the covering group $\pi$ is finite, of order $d$, and consider a chain $\tilde{C}$ in $\mtilde$ with boundary $\ltilde$, projecting to a null-homology $C$ for $L$. Then each intersection point of $L^\phi$ with $C$ lifts to $d$ intersection points in $\mtilde$. Taking signs into account, we see that 
\begin{align}\label{E:lift} 
d n & = \sum_{g,h \in \pi}\lk_{\mtilde}(\ltilde \cdot g, \ltilde^\phi \cdot h)\notag\\
& = d \left( \sum_{g \in \pi}\lk_{\mtilde}(\ltilde, \ltilde^\phi \cdot g)\right) \ \text{and so}\notag\\
n & = n' +  \left( \sum_{1\neq g \in \pi}\lk_{\mtilde}(\ltilde, \ltilde^\phi \cdot g)\right) =  n' +  \left( \sum_{1\neq g \in \pi}\lk_{\mtilde}(\ltilde, \ltilde \cdot g)\right).
\end{align}

Such linking numbers have been considered by many authors~\cite{cha-ko:covering-links,powell:twisted, przytycki-yasuhara:linking} mostly in the setting when $\pi$ is a cyclic group, where there are good formulas (originating in Seifert's work on branched covers~\cite{seifert:covers}) in terms of Seifert matrices. For more general covering groups $\pi$, we can also compute $\lktilde(L_1,L_2)$ in terms of data in $M$. This procedure seems to be well-known~\cite{goldsmith:linking} so we will just summarize it.

Suppose that the covering $\mtilde \to M$ corresponds to a homomorphism $r: \pi= \pi_1(M,x_0) \to \pi$ where $x_0$ is a basepoint. Note that the choice of lifts $\ltilde_i$ is equivalent to choosing a homotopy class $\beta_i$ of arcs (modulo multiplication by elements of $\ker(r)$) from $x_0 \in M$ to each $L_i$. Write $c_i$ for the endpoint of $\beta_i$ on $L_i$.  For each $i$, choose an immersion $\delta_i: D^2 \to M$ whose restriction to the boundary is $L_i$.  Then to any intersection point $x$ of $L_1$ and $\Delta_2 = \delta_2(D^2)$ we assign an element $g(x) \in \pi$ as follows.  Note that $\Delta_2$ is based, using $\beta_2$. Then we define
$$
g(x) = \beta_1 \ast \alpha_1 \ast \alpha_2^{-1} \ast \beta_2^{-1}
$$
where $\alpha_1$ is a path in $L_1$ from $c_1$ to $x$, and $\alpha_2$ is the projection of a path in $D^2$ to a path in $\Delta_2$ from $x$ to $c_2$.

Then~\cite[Theorem 4.10]{goldsmith:linking} we have
\begin{equation}\label{E:linking}
\lktilde(L_1,L_2) = \sum_{x \in L_1 \cap \Delta_2} \sign_x(L_1, \Delta_2)\, r(g(x))
\end{equation}
where $\sign_x(L_1, \Delta_2)$ is the local intersection number at $x$.

From a minor variation on Equation \eqref{E:linking} we get the following recipe for varying the twisted linking number.
\begin{lemma}\label{L:clasp}
Let $(L_1,\; L_2)$ be a link with both components null-homotopic. Then for any element $q = \sum a_g g \in \Z[\pi]$, there is a knot $L_1(q)$ isotopic to $L_1$ such that $\lktilde(L_1(q),L_2) = \lktilde(L_1,L_2) + q$.
\end{lemma}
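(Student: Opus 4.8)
The plan is to realize the modification by a single local move --- introducing a clasp between $L_1$ and $L_2$ --- and to read off its effect on $\lktilde$ directly from the Goldsmith-type formula \eqref{E:linking}. First I would reduce to the case $q = \epsilon g$ with $\epsilon \in \{+1,-1\}$ and $g$ a single element of $\pi$: a general $q = \sum_g a_g g$ is obtained by performing $\sum_g |a_g|$ of these elementary moves one after another, using $\epsilon = \sign(a_g)$ for each move that contributes $g$, and noting that the contributions to \eqref{E:linking} simply add since the immersed disk $\delta_2$ for $L_2$ is kept fixed throughout.

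For the elementary move, let $\mu$ be a meridian of $L_2$, chosen small enough that it bounds an embedded meridian disk $\delta_\mu \subset M$ meeting $L_2$ transversally in one point and otherwise disjoint from $L_1 \cup L_2$ and from $\Delta_2 = \delta_2(D^2)$ away from that point; orient $\mu$ so that $\lk(\mu,L_2) = \epsilon$, equivalently so that $\mu$ meets $\Delta_2$ with sign $\epsilon$ at its single intersection point $x_0$. Since $M$ and the covering are connected, $r\colon \pi_1(M,x_0) \to \pi$ is onto, so I can pick an orientation-compatible band $B$ running from $L_1$ to $\mu$, with interior disjoint from $\delta_\mu$, whose core represents, in the notation preceding \eqref{E:linking}, a based loop with $r(g(x_0)) = g$. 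Set $L_1(q) = L_1 \#_B \mu$. Two things then need checking. First, $L_1(q)$ is isotopic to $L_1$ as a knot in $M$: sliding the newly attached $\mu$-portion of $L_1(q)$ back across the disk $\delta_\mu$ undoes the band sum, and although this isotopy passes through $L_2$ once it is an honest isotopy of the \emph{knot} $L_1(q)$ --- it is only the isotopy class of the \emph{link} $(L_1,L_2)$ that changes. Second, $\lktilde(L_1(q),L_2) = \lktilde(L_1,L_2) + \epsilon g$: applying \eqref{E:linking} with the same $\delta_2$, the set $L_1(q)\cap \Delta_2$ consists of the old points of $L_1 \cap \Delta_2$, with unchanged signs and group elements; the new point $x_0$, contributing $\epsilon\, r(g(x_0)) = \epsilon g$; and possibly pairs of points where the two sides of $B$ cross $\Delta_2$, each such pair carrying opposite signs and --- because a meridian is null-homotopic in $M$ --- equal images in $\pi$, hence cancelling in $\Z[\pi]$.

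The step I expect to require the most care is the first one: that band-summing $L_1$ with a meridian of $L_2$ really is an isotopy of $L_1$, rather than merely a concordance or a homologically invisible change. This is exactly why $\mu$ must be taken small and made to bound an embedded disk disjoint from $L_1$ and from the band, and why the isotopy must be allowed to sweep across $L_2$. The remaining ingredients --- that the homotopy class of the band core can be prescribed freely so as to hit an arbitrary $g \in \pi$, and that the spurious band--disk intersections cancel --- are a routine variant of the bookkeeping already carried out in deriving \eqref{E:linking}, tracking the basepoint arcs $\beta_i$ and the connecting paths $\alpha_i$.
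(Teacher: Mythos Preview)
Your proposal is correct and is essentially the same argument as the paper's, only spelled out in greater detail: the paper reduces to $q=\pm g$ and then says ``send out a feeler from $L_1$ to $L_2$ following along the loop $g$\ldots and put a clasp whose sign is determined by the sign in $q$'', which is exactly your band-sum with a meridian of $L_2$ along a band whose core realizes $g$. Your additional bookkeeping --- the isotopy across $\delta_\mu$ showing $L_1(q)\simeq L_1$ in $M$, and the pairwise cancellation of band/$\Delta_2$ intersections in \eqref{E:linking} --- fills in details the paper leaves to the reader and the figure.
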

\begin{proof}
It clearly suffices to prove this for $q = \pm g$; the procedure is shown in Figure~\ref{F:clasp}. In summary, send out a feeler from $L_1$ to $L_2$ following along the loop $g$ (and the base paths for $L_1$ and $L_2$) and put a clasp whose sign is determined by the sign in $q$.
\end{proof}
If $L$ is a knot, then we can insert a self-clasping of either sign along a loop $g\neq 1$ to get a new knot $L(g)$ such that the linking number $\lk_{\mtilde}(\ltilde(g),\ltilde(g) \cdot g)$ changes by $\pm 1$.  If $L$ has a framing $\phi$, specified by a number $n$ and we give $L(g)$ the same framing $n$, then by \eqref{E:lift} the framing of $\ltilde(g)$ has changed by $\pm 1$.  More generally, we can do this for any finite collection of elements $g \neq 1$. Making use of \eqref{E:lift} to compute the framing upstairs, we get the following. 
\begin{lemma}\label{L:self-clasp}
For any null-homotopic framed knot $(L,\phi)$ with framing $n$ and any
$$
q = \sum_{1\neq g \in \pi} a_g g \in \Z[\pi]
$$
there is a null-homotopic framed knot $(L(q),\phi(q))$ with framing $n$ such that 
$$
\slktilde(\ltilde(q),\phi(q)) = \slktilde(\ltilde,\phi) + \sum_{1\neq g \in \pi} a_g
$$
\end{lemma}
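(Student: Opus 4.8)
The plan is to produce $L(q)$ by a sequence of self-claspings --- one block of $|a_g|$ claspings of sign $\operatorname{sgn}(a_g)$ along the loop $g$ for each $g\neq 1$ with $a_g\neq 0$, exactly of the type described just before the statement --- and then to equip the resulting knot with the framing $\phi(q)$ whose downstairs coefficient is again $n$. Because the claspings can be carried out in pairwise disjoint balls, their effects are independent, so it suffices to analyze a single self-clasping of sign $\ep=\pm1$ along one loop $g\neq 1$. Such a move is an instance of the feeler-and-clasp construction used in the proof of Lemma~\ref{L:clasp}: one runs a feeler out of $L$ following $g$ (together with the base path for $L$) and inserts a clasp of the prescribed sign with another strand of $L$. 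Being supported in a regular neighborhood of an arc, the move does not change the null-homotopy class of the knot, so $L(g)$ is again null-homotopic and its designated lift $\ltilde(g)$ is null-homologous in $\mtilde$; hence $\slktilde(\ltilde(g),\cdot)$ is defined.

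To evaluate the effect on the twisted self-linking I would combine formula \eqref{E:linking} with the equivariance $\lk_{\mtilde}(\ltilde_1\cdot h,\ltilde_2\cdot g)=\lk_{\mtilde}(\ltilde_1,\ltilde_2\cdot gh^{-1})$ and the symmetry of the linking pairing on $\mtilde$. This pins down exactly which coefficients of $\slktilde$ are disturbed by the clasp and by its deck-translates, and by how much. The second ingredient is the framing bookkeeping, which is where \eqref{E:lift} enters. Having pinned the downstairs framing of $L(g)$ to be $n$, equation \eqref{E:lift} expresses the constant coefficient of $\slktilde(\ltilde(g),\phi(g))$ --- i.e.\ the lifted framing --- as $n$ minus the sum of the off-diagonal twisted linking numbers; since the clasp has altered that off-diagonal sum, the constant coefficient must absorb the difference. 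Tallying the direct change from the clasp together with this induced change in the constant term gives the net change in $\slktilde$ produced by one move, and summing over all $g$ with the multiplicities and signs dictated by $q$ yields the asserted value of $\slktilde(\ltilde(q),\phi(q))$.

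I expect the only genuine difficulty to be precisely this coupling: the direct effect of a self-clasp on the twisted linking function is constrained by the deck-group symmetry of $\lk_{\mtilde}$ (so that changing a coefficient at $g$ forces a change at $g^{-1}$), and one must feed that, through \eqref{E:lift}, into the computation of the lifted framing while holding the downstairs framing fixed at $n$, keeping careful track of signs throughout. Everything else is routine, and in fact implicit in the proof of Lemma~\ref{L:clasp}: the existence of the feeler-and-clasp move, its support in a ball-neighborhood of an arc, and its preservation of null-homotopy.
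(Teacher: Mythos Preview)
Your proposal is correct and follows essentially the same line as the paper. The paper's argument is the short paragraph immediately preceding the lemma: insert self-clasps along loops $g\neq 1$ (as in Lemma~\ref{L:clasp}), keep the downstairs framing at $n$, and invoke \eqref{E:lift} to read off the change in the lifted framing; your plan is a more explicit version of this, additionally flagging the $g\leftrightarrow g^{-1}$ symmetry that the paper leaves implicit.
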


\begin{figure}[htb]
\labellist
\small\hair 2pt
\pinlabel {$L_1(g)$} [ ] at 265 180
\pinlabel {$L_2$} [ ] at 410 180
\pinlabel {$L_1$} [ ] at 18 180
\pinlabel {$L_2$} [ ] at 162 180
\pinlabel {$g$} [ ] at 95 150
\pinlabel {$\Longrightarrow$} [ ] at 200 110
\pinlabel {$\beta_2$} [ ] at 115 60
\pinlabel {$\beta_1^{-1}$} [ ] at 30 59
\endlabellist
\centering
\includegraphics[scale=0.8]{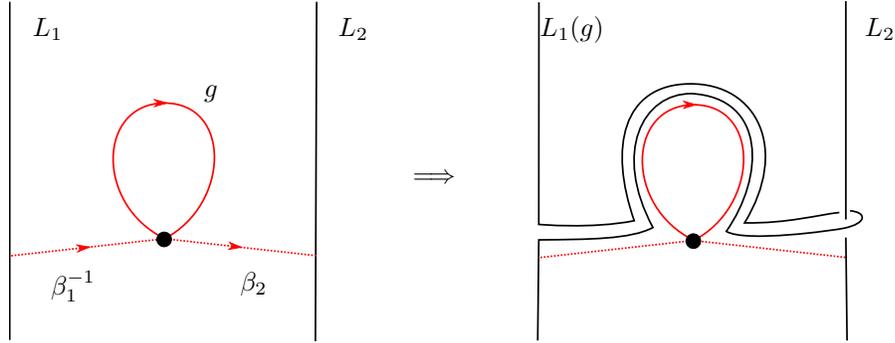}
\caption{Changing the twisted linking number}
\label{F:clasp}
\end{figure}

\subsection{Outline of the proof}
The main step in the proof is to construct a knot $k_p$ in $S^3$ so that $S^3_4(k_p)$ is topologically homology cobordant to $Q_m \conn  Y_p$. The homology cobordism will be glued onto $X_4(k_p)$ to give a topological $4$-manifold with a spine; we will argue that the resulting manifold is homeomorphic to $W_p$, and hence $W_p$ will have a topological spine.  There are several remarks before we embark on the proof. The first is that in the topological category, $Y_p$ is homology cobordant to $S^3$, so we will instead find a homology cobordism between $S^3_4(k_p)$ and $Q_m$. At the end, we will have to reassemble everything to get the correct boundary for the manifold we construct. 

A more substantial remark is that constructing a homology cobordism is an instance of {\em homology surgery}~\cite{cappell-shaneson:surgery}, since one is trying to create a manifold with a given homological rather than homotopy type. It is a standard observation, based on the Casson-Gordon invariants~\cite{casson-gordon:stanford,casson-gordon:orsay}, that homology surgery does not work well in dimension $4$, even in the topological category; see for example~\cite{akbulut:cg} or~\cite{levine-orr:survey}. So we will look to use something more like the usual surgery theory, which will produce a stronger conclusion.  In particular, we will look for a homology cobordism $V_m$ between $S^3_4(k_p)$ and $Q_m$ with fundamental group $G_m$ such that the induced cover of $S^3_4(k_p)$ is a homology sphere; in other words, $V_m$ will be a $\Z[G_m]$-homology cobordism.  This is in a sense similar to applications~\cite{friedl-teichner:slice} of surgery to knot slicing problems, where a homology surgery problem is solved by a judicious choice of fundamental group, so that Wall's surgery theory can be used directly. 

The construction of $V_m$ has the following steps.
\begin{enumerate}
\item Find a knot $K_p$  such that there is a map $f: S^3_4(K_p) \to Q_m$ that is a  $\Z[G_m]$-homology  equivalence. \label{kp}
\item Construct a normal cobordism between $f$ and the identity map from $Q_m$ to itself.\label{normal}
\item Modify $K_p$, producing $k_p$ and normal cobordism so that the surgery obstruction in $L(\Z[G_m])$ is trivial.\label{obstruction}
\end{enumerate}
The knots $K_p$ in this outline are not the same as the knots $K_p$ in~\cite{levine-lidman:spineless}, which play no role in the current work.

The second step turns out to be a straightforward deduction from known calculations, but the other two require a little more work. For instance, when $p=1$, one can find a knot $L$ (in fact the knot $12n0749$) whose knot group maps to $G_{-7}$ such that the induced $28$-fold cover of $S^3_4(L)$ is a homology sphere. However, there is no degree one map inducing this homology equivalence and in fact we couldn't find any knot in the tables (through 13 crossings, after which we gave up) with the desired properties for $K_1$. A general construction for $K_p$ is given in the next section.

%For the remainder of the paper, we will concentrate on the case when $p=1$ (or $m= -7$), and explain the modifications to the argument for higher values of $p$.

\section{Degree one maps}\label{S:degree}
In this section we carry out the first step in the construction outlined above. Unless otherwise specified, we write $m=-4p-3$ for an arbitrary integer $p$. Recall~\cite{browder:surgery} that a normal map $f:M \to X$ from a manifold $M$ to a Poincar\'e complex $X$ is a degree one map with a choice of bundle map covering $f$ from the stable normal bundle (in the appropriate category) of $M$ to a lift of the Spivak normal fibration of $X$. We will deal with only the special case when both $X$ and $M$ are manifolds with trivial stable normal bundles, so any degree one map is covered (not uniquely) by such a bundle map and can be considered as a normal map.
\begin{proposition}\label{P:deg}
There is a knot $K_p$ and a degree one normal map $f: S^3_4(K_p) \to Q_m$ that is a  $\Z[G_m]$-homology  equivalence. 
\end{proposition}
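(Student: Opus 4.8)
The strategy is to build the knot $K_p$ directly inside a natural candidate for $S^3_4(K_p)$, using the surgery description of $Q_m$ given in Figure~\ref{F:Qm} together with the twisted-linking technology of Section~\ref{s:twisted}. The starting point is the observation that $Q_m$ is $+4$-framed surgery on a knot $\kappa_p$ in the homology sphere $Y_p$ (blow down the $(-1)$-framed component $J$ in Figure~\ref{F:Qm}, as remarked after that figure), and that $\kappa_p$ is null-homotopic in $Y_p$. First I would fix the covering $Q_m \to Q_m$ determined by $r\colon G_m \twoheadrightarrow G_m = \pi_1(Q_m)$; since the surgery dual of $\kappa_p$ generates $H_1(Q_m) \cong \Z_4$ and $G_m$ has the semidirect-product form \eqref{E:Gm}, the induced cover of $S^3_4(K_p)$ being a $\Z[G_m]$-homology sphere is equivalent to a concrete statement about a single twisted self-linking number. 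Concretely, I want a null-homotopic framed knot $(K_p,\phi)$ in $S^3$ (i.e. in the $0$-framed unknot $K$ of Figure~\ref{F:Qm} after an appropriate blow-down, so that $S^3_4(K_p)$ makes sense) whose twisted self-linking $\slktilde$ in the relevant $G_m$-cover agrees, term by term in $\Z[G_m]$, with that of $\kappa_p \subset Y_p$.

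The second step is to produce a degree one map. Because both $S^3_4(K_p)$ and $Q_m$ are built from $B^4$ by attaching a single $2$-handle along a null-homotopic knot, there is a canonical way to construct a degree one map between them: map the $0$-handle/$1$-handle part identically and the $2$-handle across, using that the two attaching circles are homotopic. The point is to arrange the framings to match after passing to covers: the framing $n$ "downstairs" is the same (namely $4$), but by \eqref{E:lift} the framing upstairs is $n' = n - \sum_{1 \neq g} \lk_{\mtilde}(\ltilde,\ltilde\cdot g)$, so the homology of the induced cover is governed precisely by the off-diagonal twisted linking terms. This is where Lemma~\ref{L:self-clasp} does the work: starting from any convenient null-homotopic knot with the correct underlying homotopy class and framing $4$, I can add self-clasps along loops $g \neq 1$ in $G_m$ to adjust $\slktilde$ by any prescribed element $\sum_{1\neq g} a_g\, g$, hence realize exactly the twisted self-linking that makes the induced cover a homology sphere. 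One must check the map so constructed really is degree one normal — but as noted in the paragraph before the Proposition, all the relevant manifolds have trivial stable normal bundle, so any degree one map is automatically covered by a bundle map, and degree one is clear from the handle description.

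The remaining point is to verify that the resulting map is a $\Z[G_m]$-homology equivalence, not merely that the induced cover of the source is a homology sphere. Here I would run the standard homology-with-local-coefficients argument: $f$ is a $\Z[G_m]$-homology equivalence iff $H_*(\widetilde{Q_m}, \widetilde{S^3_4(K_p)}) = 0$, and since $f$ is degree one this relative homology is concentrated in the middle dimension and is detected by the equivariant intersection form, which in turn is computed by the twisted linking matrix — a single $1\times 1$ block over $\Z[G_m]$ in our situation. Making that block a unit (indeed, arranging the cover of the source to be a homology sphere) forces the relative homology to vanish. I expect the main obstacle to be purely bookkeeping: tracking base paths and the identification of $\pi_1$ carefully enough that the twisted self-linking $\slktilde(\kappa_p;\phi) \in \Z[G_m]$ of the Levine–Lidman knot is computed correctly via \eqref{E:linking}, and then checking that Lemma~\ref{L:self-clasp} can hit that target while keeping the knot in $S^3$ (rather than in $Y_p$) — i.e. that the $Y_p$ summand can be stripped off at this stage, consistent with the remark in the outline that in the topological category $Y_p$ is homology cobordant to $S^3$. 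Once the linking calculation is pinned down, the construction of $K_p$ and $f$ is essentially forced.
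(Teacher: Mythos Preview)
Your proposal has a genuine gap at the most important step: the construction of the degree one map.  You assert that ``both $S^3_4(K_p)$ and $Q_m$ are built from $B^4$ by attaching a single $2$-handle along a null-homotopic knot,'' but this is false for $Q_m$ when $p\notin\{0,-1\}$; indeed the paper notes explicitly after Figure~\ref{F:Qm} that only for $p=0,-1$ can one blow down $J$ to exhibit $Q_m$ as surgery on a knot in $S^3$.  For general $p$, $Q_m$ is $+4$-surgery on a knot $\kappa_p$ in the nontrivial homology sphere $Y_p$, not in $S^3$, so there is no ``handle-to-handle'' map from $S^3_4(K_p)$ to $Q_m$ of the sort you describe.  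Relatedly, your plan to compare $\slktilde$ of $K_p\subset S^3$ with $\slktilde$ of $\kappa_p\subset Y_p$ presupposes a homomorphism $\pi_1(S^3_4(K_p))\to G_m$ and a map realizing it, neither of which you have produced; the self-clasping Lemma~\ref{L:self-clasp} lets you adjust twisted self-linking once such a map exists, but it does not manufacture the map.  Finally, ``stripping off $Y_p$'' via a topological homology cobordism is used in the paper only at the very end, after the surgery problem is solved; it cannot be invoked here to transport $\kappa_p$ from $Y_p$ into $S^3$.

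The paper takes the opposite viewpoint and thereby sidesteps all of this.  Rather than starting from a knot in $S^3$ and trying to map to $Q_m$, it starts inside $Q_m$ and does $(-1)$-surgery on a carefully chosen link $\vec\gamma$ of null-homotopic curves; Lemma~\ref{L:degree} then gives the degree one map to $Q_m$ for free.  The real work is arranging two things simultaneously: that the result of this surgery is $S^3_4(K_p)$ for some knot $K_p$ in $S^3$, and that the induced $G_m$-cover is a homology sphere.  The first is handled by the ``mod $K$ unknotting link'' device---the components of $\vec\gamma$ are chosen so that blowing them down (and then $J$) unknots $J$ and leaves $+4$-surgery on a single knot in $S^3$.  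The second is handled, as you anticipated, by self-clasping to adjust the equivariant linking matrix; but the key extra ingredient you are missing is Lemma~\ref{L:K-unknot}, which shows that every element of $G_m$ is represented by a mod $K$ unknot, so the clasping can be performed without destroying the unknotting property of $\vec\gamma$.
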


We will find the following terminology to be useful.
\begin{definition}
Let $(J,K)$ be a link in $S^3$, and let $\vec\eta \subset S^3 - (J \cup K)$ be a link.
\begin{enumerate}
\renewcommand{\theenumi}{\alph{enumi}}
\item $\vec\eta$ is a {\em mod $K$ unlink} if its components bound disjoint discs in $S^3-J$.
\item $\vec\eta$ is a {\em mod $K$  unknotting link} if the linking number of every component $\eta_i$ with $J$ is $0$ and if doing a series of right or left twists along the components of $\vec\eta$ unknots $J$.
\end{enumerate}
\end{definition}
The mod $K$ terminology is intended to suggest that $\vec\eta$ has the desired property after we fill in $K$.

A very useful technique for constructing maps of degree one is the following~\cite[Proposition 3.2]{boileau-wang:bundles} (see also~\cite{gadgil:degree-one} for a converse result.)  Suppose $M$ is a 3-manifold, and $C$ a null homotopic curve. Then the result of Dehn surgery on $C$ with any slope maps to $M$ with degree one. The proof of this fact in~\cite{boileau-wang:bundles} is straightforward, but we offer the following $4$-dimensional version of the proof, which seems perhaps a bit easier. 
\begin{lemma}\label{L:degree}
Let $M$ be a $3$-manifold, and let $\vec C = \{C_1,\ldots,C_n\}$ be a framed link with all components null-homotopic. If $N$ is the result of surgery on $\vec C$ (with the given framings) then there is a degree one map from $N$ to $M$.
\end{lemma}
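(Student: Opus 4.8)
The plan is to realize the surgery by a $4$-dimensional cobordism and then produce the degree one map by extending a projection across the attached $2$-handles.

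First I would form the trace $W$ of the surgeries: start from $M\times[0,1]$ and, for each $i$, attach a $2$-handle $h_i\cong D^2\times D^2$ along a tubular neighborhood $\nu(C_i)\cong S^1\times D^2$ of $C_i\subset M\times\{1\}$, using the given framing to identify $\nu(C_i)$ with $S^1\times D^2$. Then $W$ is a compact oriented $4$-manifold with $\partial W = M\sqcup \overline N$, where $\overline N$ is $N$ with the boundary orientation. The lemma follows once we build a map $r\colon W\to M$ with $r|_{M\times\{0\}}=\id_M$: restricting $r$ to the other boundary component gives a map $N\to M$ whose degree is then computed by a long exact sequence argument.

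To construct $r$, begin with the projection $g\colon M\times[0,1]\to M$. On each attaching region $\nu(C_i)\cong S^1\times D^2\subset M\times\{1\}$ the map $g$ is the inclusion $\nu(C_i)\hookrightarrow M$, which, via the deformation retraction of the solid torus onto its core, is homotopic to the composite $S^1\times D^2\to S^1\xrightarrow{C_i}M$. Since $C_i$ is null-homotopic, a choice of null-homotopy, i.e.\ a map $c_i\colon D^2\to M$ restricting to $C_i$ on $\partial D^2$, lets us extend this composite over $h_i=D^2\times D^2$ by $(x,y)\mapsto c_i(x)$; note that the framing is irrelevant here. The attaching regions all lie in $M\times\{1\}$, hence are disjoint from $M\times\{0\}$, so by the homotopy extension property we may homotope $g$, rel $M\times\{0\}$, to a map agreeing with these handle extensions along the attaching regions. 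Gluing in the handle extensions produces $r\colon W\to M$ with $r|_{M\times\{0\}}=\id_M$.

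For the degree: in the exact sequence of the pair $(W,\partial W)$ the image of $[\partial W]=[M]-[N]\in H_3(\partial W)$ under $H_3(\partial W)\to H_3(W)$ is zero, hence $r_*$ kills it; since $r|_{M\times\{0\}}=\id_M$ gives $r_*[M]=[M]$, we conclude $(r|_N)_*[N]=[M]$ in $H_3(M)$, so $r|_N\colon N\to M$ has degree one (after orienting $N$ suitably). I do not expect a real obstacle here: the only point needing care is the homotopy extension bookkeeping that keeps $r$ equal to the identity on $M\times\{0\}$ while it is modified near the handles, and this is essentially automatic because the two ends of the cobordism are disjoint. It is worth emphasizing that the argument uses only that each $C_i$ is null-homotopic and is entirely independent of the framings, exactly as in the Boileau--Wang statement it reproves.
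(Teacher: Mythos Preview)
Your argument is correct and is essentially the paper's own proof, written out in more detail. The paper forms the same trace $X=W$, asserts that null-homotopy of each $C_i$ lets the identity on $M\times\{0\}$ extend to a retraction $X\to M$, and then notes that $N$ is homologous to $M$ in $X$ so the restriction has degree one; your handle-by-handle extension and long exact sequence computation simply unpack these two sentences.
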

Note that this implies the stronger statement where surgery is replaced by Dehn surgery (or more properly, Dehn filling). The standard argument~\cite[Chapter 9.H]{rolfsen:knots} that a Dehn surgery can be done as a sequence of `honest' surgeries applies in an arbitrary $3$-manifold. The auxiliary curves that are added in this process are null homotopic, and so one can apply Lemma~\ref{L:degree}.
\begin{proof}
Let $X$ be the result of adding $2$-handles to $M \times I$ along $\vec{C} \times \{1\}$ with the given framings, so that $\partial X \cong N \cup M$. Since each component of $\vec{C}$ is null-homotopic, the identity map of $M$ extends to a retraction from $X$ to $M \times \{0\}$. Since $N$ is homologous to $M$ in $X$, the restriction of this map to $N$ has degree one.
\end{proof}

\subsection{Proof of Proposition~\ref{P:deg}}
We will assume $p \geq 0$ for simplicity; the case of negative $p$ is similar with slightly different pictures and notation.

To apply this consider the curves $\eta_1,\ldots,\eta_p$ in Figure~\ref{F:Qmeta2}(b). We would like to do $-1$-framed surgery along these curves to create a degree one map; the problem is that they are not trivial in $\pi_1(Q_m)$.
The curve $\eta_0$ will be used below in the proof of Lemma~\ref{L:K-unknot} to modify $\eta_1,\ldots,\eta_p$; the curve $z$ will be used afterwards to change the fundamental group.
%%%%%%%%%%%%%%%%%%%% FIG3 : Surgery curves in $Q_{m}$ %%%%%%%%%%%%
\begin{figure}[htb]
\centering
\includegraphics[scale=.55]{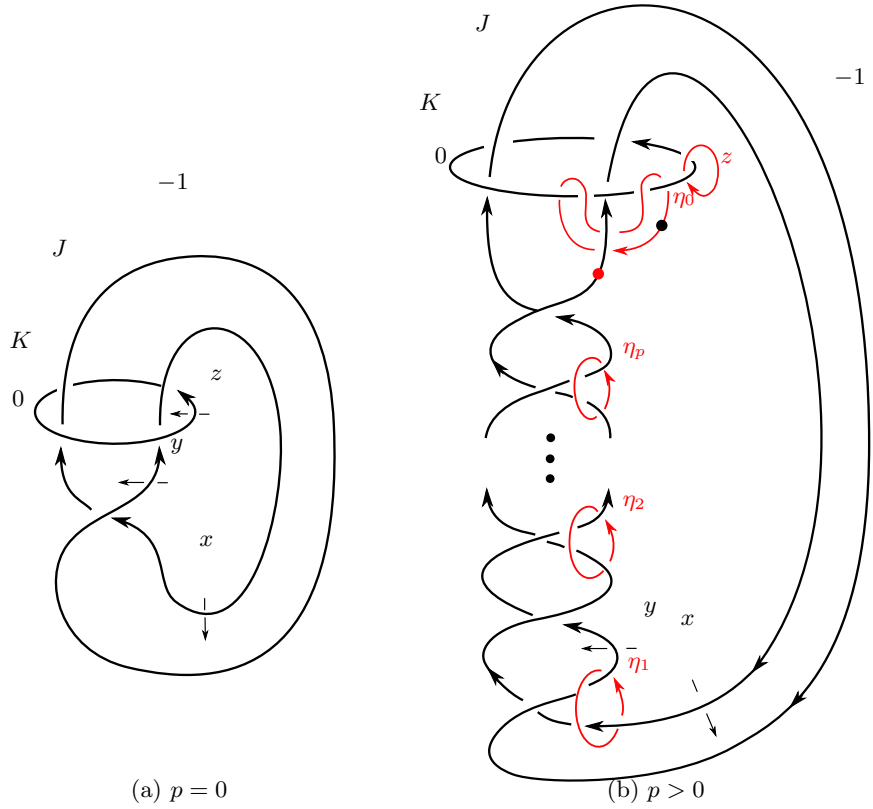}
\put(-150,285){\footnotesize$J$}
\put(-14,265){\footnotesize$-1$}
\put(-165,235){\footnotesize$0$}
\put(-171,255){\footnotesize$K$}
\put(-57,235){\footnotesize$\color{red}{z}$}
\put(-75,220){\footnotesize$\color{red}{\eta_0}$}
\put(-92,44){\footnotesize$\color{red}{\eta_1}$}
\put(-94,105){\footnotesize$\color{red}{\eta_2}$}
\put(-94,163){\footnotesize$\color{red}{\eta_p}$}
\put(-86,65){\footnotesize$y$}
\put(-72,60){\footnotesize$x$}
\put(-270,225){\footnotesize$-1$}
\put(-310,200){\footnotesize$J$}
\put(-325,143){\footnotesize$0$}
\put(-326,165){\footnotesize$K$}
\put(-250,153){\footnotesize$z$}
\put(-265,127){\footnotesize$y$}
\put(-254,90){\footnotesize$x$}
\put(-280,-05){\footnotesize(a) $p=0$}
\put(-100,-05){\footnotesize(b) $p>0$}
\caption{Surgery curves in $Q_{m}$}
\label{F:Qmeta2}
\end{figure}
%%%%%%%%%%%%%%%%%%%%%%%%%%%%%%%%%

Note that $\vec\eta = \{\eta_1,\ldots,\eta_p\}$ is a mod $K$ unknotting link for $J$ and that $\eta_0$ is an unknot mod $K$. It follows easily that there is a band sum of $\vec\eta$ with $p$ copies of $\eta_0$ to make a link $\vec\gamma = \{\gamma_1,\ldots,\gamma_p\}$  that is also a  mod $K$ unknotting link for $J$ and so that the homotopy class of $\gamma_i$ is $\eta_i^{-1}\ast \eta_0$. The calculation in the proof of Lemma~\ref{L:K-unknot} shows that $\eta_i^{-1}\ast \eta_0$ is trivial in $\pi_1(Q_{m})$ for all $i$.  

Note that we can find other mod $K$ unknotting links for $J$ by taking the band sum of components of $\vec\gamma$ with any mod $K$ unknot; we need to know which homotopy classes contain such knots. 
\begin{lemma}\label{L:K-unknot}
Every element of $\pi_1(Q_{m})$ is represented by a  mod $K$ unknot.
\end{lemma}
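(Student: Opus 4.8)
The plan is to realize every element of $\pi_1(Q_m) = G_m$ by an explicitly drawn knot in the surgery diagram of Figure~\ref{F:Qmeta2} that, after filling in $K$, bounds an embedded disc in the complement of $J$. Concretely, I would first fix a preferred arc from the basepoint to a small unknot $U$ sitting in a ball disjoint from $K$ and $J$; such a $U$ is a mod $K$ unknot representing the trivial element, and dragging $U$ around a loop representing $g \in G_m$ (keeping it embedded and keeping it bounding a disc disjoint from $J$ throughout the isotopy) produces a mod $K$ unknot in the class $g$. So the real content is just to exhibit, for a generating set of $G_m$, loops along which this dragging can be performed while staying in the complement of $J$ --- equivalently, to check that the natural meridional and longitudinal curves visible in the diagram are mod $K$ unknots, and that the set of homotopy classes carrying a mod $K$ unknot is closed under the group operations.

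The key steps, in order: (1) Record a presentation of $G_m$ from the semidirect product structure \eqref{E:Gm}: a generator $t$ of the $\Z_4$ quotient and a generator $s$ of the normal $\Z_{|m|}$, with $tst^{-1} = s^{-1}$, identifying $s$ and $t$ with concrete curves in the surgery picture (e.g. $s$ a meridian-type curve of one component, $t$ coming from the $\rp^2$ base). (2) Observe that band-summing two mod $K$ unknots along a band that follows the base arcs yields a mod $K$ unknot in the product class --- this is exactly the move used just above to build $\vec\gamma$ from $\vec\eta$ and $\eta_0$, and it shows the set of realized classes is closed under multiplication; taking mirror bands gives closure under inversion. (3) Exhibit one mod $K$ unknot in the class of each generator: for the curve $\eta_0$ this is already asserted ("$\eta_0$ is an unknot mod $K$"), and I would draw the analogous small unknotted curves encircling the relevant strands so that after the $0$-surgery on $K$ is filled in they visibly bound discs missing $J$. (4) Conclude by the group-theoretic closure that every word in the generators, hence every element of $G_m$, is represented. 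As a byproduct this is what lets one later band-sum the $\gamma_i$'s with mod $K$ unknots in prescribed classes to kill their homotopy classes, which is presumably the point of the lemma.

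The main obstacle I expect is step (3): one must produce the generating mod $K$ unknots honestly inside the given diagram and verify both that they are unknotted in $S^3$ and that the disc they bound can be taken disjoint from $J$ (the "mod $K$" condition only rescues us from $K$, not from $J$). The curve $t$, which projects nontrivially to the $\rp^2$ factor, is the delicate one --- a loop going "around" the $\rp^2$ will generically hit $J$, so one has to route it carefully using the $0$-framed $K$ to cancel the intersections after filling. The finiteness of $G_m$ and the explicit circle-bundle-over-$\rp^2$ geometry should make this tractable: I would use the description of $Q_m$ as $S^1$-bundle over $\rp^2$ directly, where a fiber and a suitable section-with-boundary give the two generators, and pull the resulting curves back into the Kirby picture. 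Everything else (the band-sum bookkeeping, the homotopy-class computation $\eta_i^{-1}\ast\eta_0 = 1$ promised in the statement) is routine diagram chasing.
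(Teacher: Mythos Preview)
Your four-step plan is correct and is essentially what the paper does: exhibit two mod $K$ unknots that generate $G_m$, then use closure under band sum. One caveat: the ``dragging'' device in your opening paragraph does not work as stated --- isotoping a small nullhomotopic unknot $U$ around a loop $g$ leaves it nullhomotopic, not in the class $g$ --- but you do not actually rely on this, since steps (2)--(3) correctly ask for genuine mod $K$ unknot representatives of the generators.

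Where your emphasis differs from the paper is in locating the difficulty. You expect the hard part to be step (3), producing a mod $K$ unknot for the $\Z_4$ generator $t$, and propose to extract it from the circle-bundle geometry. The paper sidesteps this entirely: it simply takes the curve $z$ already drawn in Figure~\ref{F:Qmeta2} (a meridian of one of the link components), which visibly bounds a disc in $S^3 - J$ once $K$ is filled in, together with the curve $\eta_0$ already noted to be a mod $K$ unknot. The actual work is then all in step (1): an explicit Wirtinger computation from the surgery diagram reduces $\pi_1(Q_m)$ to $\langle y, z \mid z^2 = y^{4p+3},\ z^{-1}yz = y^{-1}\rangle$, reads off $\eta_0 = y^2$ from the picture, and solves $y = \eta_0^{-(2p+1)}z^2$, so that $\eta_0$ and $z$ generate. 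In short, your instinct about where the difficulty lies is inverted: finding mod $K$ unknot generators is immediate from the diagram, and verifying that they generate is the computation.
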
  
\begin{proof}
We claim that $\pi_1(Q_{m})$ is generated by the curves $\eta_0$ (with the indicated base-point) and $z$. Both of these are mod $K$ unknots, so any product of them is represented by a mod $K$ unknot. Recalling that $m = -4p-3$, it is known that $Q_{m}$ can be represented by a surgery along $(-1)$-surgery on the $(2,2p+1) $ torus knot $J$ and $0$-surgery on a curve grabbing twice $K$ as in Figure~\ref{F:Qmeta2}(b) which is drawn for positive $p$. Let $g_p$ be the fundamental group of the complement of the link $(J,K)$ in $S^3$. For $p=0$, a Wirtinger presentation shows that $g_0$ is generated by the curves labeled $x, y, z$  in Figure~\ref{F:Qmeta2}(a) with relations $y^{-1}x(yzy^{-1}z^{-1})=1$, $yzx^{-1}z^{-1}=1$, and $yz^{-1}y^{-1}x^{-1}zx=1$. Note that the complement of the link $(J,K)$  for all the $p$ is the same as for $p=0$ since we do a twist around $K$, where the twist diffeomorphism from the complement of the link for $p=0$ to that for arbitrary $p$ takes $x\mapsto x$, $y\mapsto y$, and $z\mapsto (xy)^pz$. So, the fundamental group $g_p$ is generated by curves $x, y, z_p$ where $z_p$ denotes $(xy)^pz$, with relations $ y^{-1}x(yz_py^{-1}z_p^{-1})=1$, $yz_px^{-1}z_p^{-1} =1$, and $yz_p^{-1}y^{-1}x^{-1}z_px=1$.

The fundamental group $\pi_1(Q_{m})$ is obtained from $g_p$ simply by adding the relations induced from surgery curves $S_0$, $S_{-1}$, which are parallel to $K$ and $J$, with  framings $0$ and $-1$ respectively. From Figure~\ref{F:Qmeta2}, we can read off a presentation for each surgery curve; starting near the dot on curve of $J$ we get $1=zy^{p+1}zy^{-2p-2}y^{-p}=zy^{p+1}zy^{-3p-2}$  for $S_{-1}$ and obviously, the $0$-surgery $S_0$ gives $xy=1$.

So, we get a presentation for $\pi_1(Q_{m})$ with generators $x, y, z_p$ and relations 
\begin{align}
 y^{-1}x(yz_py^{-1}z_p^{-1}) & = 1,\\
yz_px^{-1}z_p^{-1} &= 1, \\
yz_p^{-1}y^{-1}x^{-1}z_px & = 1,\\
xy & = 1, \label{E:S0} \\
zy^{p+1}zy^{-3p-2}  & = 1 
\end{align}

Using \eqref{E:S0}, we get rid of the generator $x$ and the relation $(6)$ so that the presentation is of the form with generators $y, z$ and relations;
\begin{equation}\label{presentation2Qm}
y^{-2}(yzy^{-1}z^{-1})=1,\  \ yzyz^{-1}=1,\ \text{and} \  \ zy^{p+1}zy^{-3p-2}=1.
\end{equation}

The first two relations in~\eqref{presentation2Qm} are equivalent to $z^{-1}yz=y^{-1}$, which implies that $z^{-1}y^{p+1}z=y^{-p-1}$, so $y^{p+1}z=zy^{-p-1}$. Then the third relation $1=zy^{p+1}zy^{-3p-2}=z(zy^{-p-1})y^{-3p-2}=z^2y^{-4p-3}$. So, this reduction gives a presentation for $\pi_1(Q_{m})$ as follows;
\begin{equation}\label{fundgpQm}
 \la y, z \mid z^2=y^{4p+3}, z^{-1}yz=y^{-1} \ra .
\end{equation}
From Figure~\ref{F:Qmeta2}, we read off $\eta_0=yzy^{-1}z^{-1}$ and $\eta_1=y^2$, so the first relation in~\eqref{presentation2Qm} implies that $1=y^{-2}(yzy^{-1}z^{-1})=\eta_1^{-1}\ast \eta_0=y^{-2}\eta_0=1$, so $y^2=\eta_0$. This makes the relation in~\eqref{fundgpQm} 
 $z^2=y^{4p+3}=y^{2(2p+1)}\cdot y=\eta_0^{2p+1}\cdot y$, so $\eta_0^{-(2p+1)}z^2=y$, which shows that $\pi_1(Q_{m})$ is generated by $\eta_0$ and $z$.  This completes the proof of Lemma~\ref{L:K-unknot}.
\end{proof}

Returning to the proof of Proposition~\ref{P:deg}, note that  relation \eqref{E:S0} implies that the curves $\eta_1,\ldots,\eta_p$ all represent the class $y^2\in G_m = \pi_1(Q_{m})$, as does $\eta_0$.
Let $N_m$ be the result of $(-1)$ framed surgery on each component of  $\vec\gamma$. Then by the discussion above, there is a degree one map from $N_m$ to $Q_{m}$.  By construction, after surgery on $\vec\gamma$, we can blow down $\vec\gamma$ and $J$ in succession. The result is $+4$ surgery on some new knot $\hat{K}_m$ in the $3$-sphere, which can in principle be drawn. It will be a little messy, and is about to get a lot messier so we do not draw it.

At this point we have achieved most of step \eqref{kp}, since $N_m = S^3_4(\hat{K}_m)$ maps with degree one to $Q_{m}$, and hence its fundamental group maps surjectively onto $G_m$ and inducing an isomorphism on integral homology. However, there is no reason that it should induce an isomorphism in twisted homology. For example, a direct calculation using SnapPy~\cite{SnapPy} and GAP~\cite{GAP4} reveals that for $m=-7$, the homology of the induced $28$-fold cover $\ntilde_{-7}$ is quite large. We show how to modify $\vec\gamma$ (and hence $\hat{K}_m$) so as to kill the homology of $\ntilde_m$.

By construction, $\ntilde_m$ is surgery on the preimage of $\gamma$ in the universal cover of $Q_{m}$, which is the $3$-sphere.  As such, its homology is presented by the $p|G_m| \times p|G_m|$ matrix of linking numbers $\LL = \lk_{S^3}(\tilde{\gamma_i}\cdot g,\tilde{\gamma}_j' \cdot h)$ where $\gamma_j'$ is the $(-1)$-pushoff of $\gamma_j$. The framing on $\tilde{\gamma}_j$ is determined by Equation \eqref{E:lift}.  Using Lemma~\ref{L:self-clasp} we can modify $\gamma$ by self-clasping moves along elements of $\pi = G_{m}$ so that the first row of $\LL$ becomes the vector $(n',0,\ldots,0)$.  Since the framing on $\gamma$ was $-1$, Equation \eqref{E:lift} says that $n'= -1$ as well. 
 
By Lemma~\ref{L:K-unknot}, every element of $G_{m}$ is represented by a  mod $K$ unknot, so we can do the self-clasping moves so that modified version of $\vec\gamma$ remains a mod $K$ unknotting link for $J$. It follows that the result of blowing down the modified $\vec\gamma$, followed by blowing down $J$, turns $K$ into a knot $K_p$ with all the desired properties. 

Finally, since $S^3_4(K_p)$ and $Q_m$ are both parallelizable, any degree one map is covered by a map of stable normal bundles, and hence is a normal map. \qed
\begin{remark}
This argument for altering the homology of a covering space was inspired by Jerry Levine's surgical construction of a knot with a given Alexander polynomial~\cite{levine:characterization}.
\end{remark}

\section{The surgery problem}\label{S:surgery}
First we show that the map constructed in the previous section gives rise to a normal map to $Q_m \times I$.
\begin{proposition}\label{P:normal}
Any degree one normal map $f:Y\to Q_m$ that is a  $\Z[G_m]$-homology  equivalence is normally cobordant to the identity map of $Q_m$.
\end{proposition}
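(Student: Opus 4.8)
The plan is to show that the normal map $f : Y \to Q_m$ is normally cobordant to the identity by analyzing the normal invariant in $[Q_m, G/\TOP]$ (or $G/O$, since everything in sight is smooth/PL and parallelizable), and showing that the relevant difference obstruction vanishes. First I would recall that the set of normal cobordism classes of degree one normal maps to $Q_m$ is a torsor over the bordism group $[Q_m, G/\TOP]$; concretely, $f$ and $\id_{Q_m}$ differ by an element $\nu(f) \in [Q_m, G/\TOP]$, and the two maps are normally cobordant precisely when $\nu(f) = 0$. So the task reduces to a computation of this normal invariant, or rather to showing it is trivial.

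The key point is that $Q_m$ is a closed $3$-manifold, so $[Q_m, G/\TOP]$ is detected by low-dimensional characteristic-class and bordism data. Using the standard computation of $G/\TOP$ through dimension $3$ — namely $\pi_1(G/\TOP) = 0$, $\pi_2(G/\TOP) = \Z/2$ (surface Kervaire/Arf invariant), $\pi_3(G/\TOP) = 0$ — together with the Atiyah-Hirzebruch spectral sequence, one finds that $[Q_m, G/\TOP]$ is a $2$-group supported in the single relevant cohomology group $H^2(Q_m; \Z/2)$, with the normal invariant determined by its Kervaire-Arf component $k_2(f) \in H^2(Q_m;\Z/2)$ (the higher $L$-class / $l$-class contribution sitting in $H^0$ and $H^{4k}$ is irrelevant in dimension $3$). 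Thus the whole obstruction lives in $H^2(Q_m;\Z/2)$. Since $H_1(Q_m) \cong \Z_4$ is cyclic, $H^2(Q_m;\Z/2) \cong H_1(Q_m;\Z/2) \cong \Z/2$ is at most order $2$, so there is only one bit to kill.

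Here is where the hypothesis that $f$ is a $\Z[G_m]$-homology equivalence does the work. The Kervaire-Arf normal invariant $k_2(f)$, paired against a class in $H_2(Q_m;\Z/2)$ represented by a surface $F$, is computed as the Arf invariant of the codimension-zero surgery kernel over $F$ obtained by making $f$ transverse to $F$. Because $f$ is a homology equivalence with coefficients in $\Z[G_m]$ — in particular a $\Z/2$-homology equivalence and more — this restricted surgery kernel is $\Z/2$-acyclic with the right fundamental-group control, forcing its Arf invariant to vanish. (Equivalently: one can invoke the fact that a homotopy equivalence, or even a suitably controlled homology equivalence, has trivial normal invariant; the $\Z[G_m]$-homology condition plus $G_m$-surjectivity is exactly what is needed to run this over $Q_m$.) Hence $k_2(f) = 0$, so $\nu(f) = 0$ in $[Q_m, G/\TOP]$, and therefore $f$ is normally cobordant to $\id_{Q_m}$.

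The step I expect to be the main obstacle is the middle one: pinning down precisely that the only surviving summand of $[Q_m, G/\TOP]$ is the $H^2(Q_m;\Z/2)$-Arf term and that the $\Z[G_m]$-homology equivalence hypothesis kills exactly that class — one must be careful that no secondary differential in the Atiyah-Hirzebruch spectral sequence for $Q_m$ contributes, and that the vanishing of the surgery kernel's Arf invariant really follows from homology equivalence (as opposed to homotopy equivalence) with the given coefficients. The degree-one normal bordism groups and the $G/\TOP$ computation are standard (Wall, Kirby-Siebenmann), so the remaining verification is the clean but slightly delicate identification of the normal invariant with a single $\Z/2$ and its annihilation by the hypothesis on $f$.
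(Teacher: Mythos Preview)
Your computation that $[Q_m, G/\TOP] \cong \Z/2$, detected by a single codimension-one Arf class, is correct and agrees with the paper. The gap is in how you kill that class. You assert that the restricted surgery kernel over a dual surface $F \subset Q_m$ is ``$\Z/2$-acyclic with the right fundamental-group control'' because $f$ is a $\Z[G_m]$-homology equivalence. That inference is not valid: making $f$ transverse to $F$ yields a degree-one normal map $f^{-1}(F) \to F$ of surfaces, but the global vanishing of the $\Z[G_m]$-surgery kernel of $f$ says nothing about the $\Z/2$-homology of this codimension-one slice --- $f^{-1}(F)$ may well have larger genus than $F$ and support a nontrivial Arf form. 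Your parenthetical fallback, that a ``suitably controlled homology equivalence has trivial normal invariant,'' is precisely the assertion to be proved and does not follow formally from the hypothesis; as you yourself flag, this is the delicate point, and your proposal supplies no mechanism to resolve it. Note that your argument, as written, uses nothing special about $G_m$, so if it worked it would show that every $\Z[\pi]$-homology equivalence of closed $3$-manifolds has trivial normal invariant --- a statement that certainly requires justification beyond what you give.

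The paper closes this gap with an algebraic input specific to $G_m$. From exactness of the modified surgery sequence
\[
\bar{\Sm}(Q_m) \xrightarrow{\;N\;} [Q_m, G/\TOP] \xrightarrow{\;\theta\;} L_3^h(\Z[G_m])
\]
one has $\theta(N(f)) = 0$, since $f$ lies in $\bar{\Sm}(Q_m)$. The key additional step is that $\theta$ is \emph{injective} here: the semidirect-product description of $G_m$ gives a split surjection $G_m \to \Z_4$, hence a split surjection $L_3(\Z[G_m]) \to L_3(\Z[\Z_4]) \cong \Z/2$, and this $\Z/2$ is detected exactly by the codimension-one Arf invariant. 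Thus the single $\Z/2$ in $[Q_m, G/\TOP]$ injects into $L_3(\Z[G_m])$, and $\theta(N(f)) = 0$ forces $N(f) = 0$. The group-theoretic input --- the split quotient to $\Z_4$ together with the computation $L_3(\Z[\Z_4]) \cong \Z/2$ --- is the idea your proposal is missing.
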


\begin{proof}
We make use of a version~\cite{kirby-taylor:surgery} of the surgery exact sequence for $3$-manifolds, in which the usual structure set $\Sm(M)$ for homotopy equivalences to a manifold $M^3$ is replaced by the set of $\Z[\pi_1(M)]$-homology equivalences.  We are in a setting where we don't know (or care) if the homology equivalence is simple. Hence the relevant surgery groups are $L^h$ groups, and the equivalence relation for the structure set is up to $h$-cobordism instead of homeomorphism or diffeomorphism.  For our purposes, it doesn't matter whether we are working in the topological or smooth category; we will stick to $\TOP$ for convenience.  Writing for $\bar{\Sm}(M)$ for this modified structure set, there is an exact sequence
\begin{equation}\label{E:surgery}
\begin{CD}
\bar{\Sm}(M)  @>N>>  [M,G/\TOP]  @>\theta>> L_3^h(\Z[\pi_1(M)])
\end{CD}
\end{equation}
The first map is the normal invariant, while the second is the surgery obstruction.  The maps in this sequence have been studied in some detail in~\cite{jahren-kwasik:three}, and we could base a proof of Proposition~\ref{P:normal} on Theorem 2 of that paper. However, we give an alternate and perhaps slightly more direct argument.

By the well-known calculation~\cite{kirby-siebenmann:triangulation} of the low-dimensional homotopy groups of $G/\TOP$, we get that 
$$
[Q_m,G/\TOP]  = [Q_m, K(\Z_2,1)] = H^1(Q_m;\Z_2) = \Z_2.
$$
The map to $\Z_2$ is a codimension-one Arf invariant, and we claim that it vanishes.  Because the projection $G_m \to \Z_4$ splits ($G_m$ being a semi-direct product as in \eqref{E:Gm}) the induced map $L_3(\Z[G_m]) \to L_3(\Z[\Z_4]) $  is a split surjection.  It is known~\cite{hambleton-taylor:guide} that $L_3(\Z[\Z_4]) \cong \Z_2$, with the non-trivial element being detected by a codimension-one Arf invariant. So in this case, the surgery obstruction map $\theta$ is an injection.  Since the composition $\theta \circ N$ is trivial, and our given $f$ is a $\Z[G_m]$-homology equivalence, its normal invariant must be trivial. 
\end{proof}

\subsection{Killing the surgery obstruction}\label{S:obstruction}
At this point, we have a normal map $F: Z^4 \to Q_m \times I$ where $\partial Z = S^3_4(K_p) \coprod -Q_m$. The restriction of $F$ to $Q_m$ is the identity, and the restriction to $S^3_4(K_p)$ is a $\Z[G_m]$-homology equivalence.  Hence there is a well-defined surgery obstruction $\theta(F) \in L_4(\Z[G_m])$ to doing surgery on $Z$ to obtain a homotopy $Q_m \times I$. In principle, the groups $L_4(\Z[G_m])$ are computable~\cite{wall:VI,hambleton-taylor:guide} but in practice there are $2$-torsion obstructions that are hard to evaluate. 

Our approach is to alter the knot $K_p$ one more time to kill the surgery obstruction. The main step is worth summarizing in a lemma; it applies both to $L_4^s$ and $L_4^h$, either of which will be temporarily denoted $L_4$.  Recall~\cite[Chapter 5]{wall:book} that an element $\theta$ of $L_4(\Z[G])$ may be represented, possibly after stabilizations, by a free module $A$ over $\Z[G]$ and a pair $(\lambda,\mu)$. Here $\lambda$ is a form on $A\times A$ that is Hermitian with respect to the involution induced by $g \to g^{-1}$ on $\Z[G]$ and $\mu$ is a quadratic form on $A$ related to $\lambda$.  Let $B$ be a matrix for $\lambda$; the image of $B$ under the augmentation $\epsilon: \Z[G] \to \Z$ has a signature which we denote by $\sign(\theta)$.
\begin{lemma}\label{L:obstruction}
Suppose that $K$ is a knot in $S^3$, and that there is a $3$-manifold $Y$ along with a $\Z[G]$-homology equivalence $f:S^3_n(K) \to Y$ (here $G= \pi_1(Y)$). Then there is a knot $K'$ and a topological normal cobordism  $(Z,F)$ from $f$ to a $\Z[G]$-homology equivalence $f':S^3_n(K') \to Y$ with surgery obstruction $\theta(Z,F) = \theta$. If $\sign(\theta) \equiv 0 \pmod{16}$ then there is a smooth normal cobordism.
\end{lemma}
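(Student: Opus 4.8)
The plan is to realize an arbitrary element $\theta \in L_4(\Z[G])$ geometrically by a ``knot cobordism with clasps,'' in direct analogy with the construction used in Section~\ref{S:degree} to alter the twisted homology. Recall that an element of $L_4(\Z[G])$ is represented by a nonsingular quadratic form $(\lambda,\mu)$ on a based free module $A = \Z[G]^r$. The key geometric input is that such a form can be realized by plumbing: attach $r$ trivial $2$-handles to $S^3_n(K) \times I$ along a framed unlink lying in a ball (so that the new upper boundary is again $S^3_n(K)$, handles cancelling), then introduce $\lambda_{ij}$ clasps between the $i$-th and $j$-th handle cores and $\mu_i$ ``self-clasps'' on the $i$-th one, routing each clasp along the group element $g$ recording its coefficient in $\lambda_{ij} = \sum a_g g$. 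This is exactly the move of Lemma~\ref{L:clasp} and Lemma~\ref{L:self-clasp}, now performed on handle cores rather than on $K$ itself; it changes neither the upper boundary manifold nor the degree of the map, but it glues a copy of the standard plumbing $(Z_\theta, F_\theta)$ — a normal cobordism with surgery obstruction $\theta$ — onto $(Z,F)$.

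**First** I would make precise the claim that the modified upper boundary is still $n$-surgery on \emph{some} knot $K'$ in $S^3$, with a $\Z[G]$-homology equivalence $f': S^3_n(K') \to Y$. Since each added handle is attached along a nullhomotopic curve, Lemma~\ref{L:degree} gives a degree one map; the clasping moves do not affect nullhomotopy (each clasp is supported near a band following a loop in $S^3_n(K)$), so after blowing down the cancelling pairs one is left with $K'$ a knot in $S^3$ and $S^3_n(K') \to Y$ degree one. That it remains a $\Z[G]$-homology equivalence follows because $(Z_\theta,F_\theta)$ is a normal cobordism between $\Z[G]$-homology equivalences — the plumbing is built on a model where the relevant twisted homology is controlled by the (nonsingular) intersection form, hence the algebraic mapping cone is acyclic over $\Z[G]$. **Then** $\theta(Z \cup Z_\theta, F \cup F_\theta) = \theta(Z,F) + \theta$; replacing the original $(Z,F)$ by its composite with the inverse plumbing lets us arrange the surgery obstruction to be exactly any prescribed $\theta$, which is the first assertion.

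**For the smooth statement**, the point is that the only obstruction to smoothing the composite normal cobordism is a Kirby--Siebenmann / Rokhlin-type obstruction, and in dimension $4$ this is read off by a signature mod $16$. Concretely: the plumbing $(Z_\theta,F_\theta)$ can be taken smooth; the composite $Z \cup Z_\theta$ is $\TOP$, and it is smoothable rel boundary precisely when its Kirby--Siebenmann invariant in $H^4(Z\cup Z_\theta, \partial;\Z_2)=\Z_2$ vanishes. Since $Z\cup Z_\theta$ is a cobordism of $3$-manifolds (so has no interior $4$-dimensional homology beyond what the handles contribute) the $\mathrm{ks}$ invariant is the mod $2$ reduction of $\frac18(\sign(\theta) - \sigma(Z\cup Z_\theta))$; the augmentation $\epsilon\colon \Z[G]\to\Z$ identifies the ordinary signature of the composite with $\sign(\theta)$ up to the a priori-zero contribution of $Z$ (whose augmented form is metabolic, being a homology cobordism rel boundary over $\Z$). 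Hence $\mathrm{ks}=0$ iff $\sign(\theta)\equiv 0 \pmod{16}$, in which case a smooth structure rel boundary exists.

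**The main obstacle** I anticipate is not the clasping construction — that is a routine upgrade of Lemmas~\ref{L:clasp} and~\ref{L:self-clasp} — but rather verifying cleanly that the plumbing $(Z_\theta, F_\theta)$ really is a $\Z[G]$-homology \emph{equivalence} on both ends and that its surgery obstruction is the intended $\theta$ (and not, say, $\theta$ twisted by some unit or shifted by a boundary term). One must be careful that the handles, attached along an unlink in a ball, are genuinely trivial in $\pi_1$ \emph{and} that the clasps can be routed along arbitrary $g\in G$ while keeping the cores nullhomotopic — this is where Lemma~\ref{L:K-unknot}-type control (``every class is represented by an unknot after filling'') is quietly used. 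The second delicate point is the identification of the $\TOP$-to-$\mathrm{DIFF}$ obstruction with $\sign(\theta) \bmod 16$: one should cite Kirby--Siebenmann and the fact that for a $4$-dimensional cobordism rel $\partial$ with the homology concentrated in middle dimension, $\mathrm{ks}$ equals $\tfrac18\sigma \bmod 2$, and confirm the augmented-signature bookkeeping. I expect these to be short but must be stated carefully.
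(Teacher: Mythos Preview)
Your overall strategy---realize $\theta$ by attaching $2$-handles along an unlink in a ball in $S^3_n(K)$ and then clasping along group elements, following Wall's realization theorem---is the same as the paper's. But there is a genuine gap at the step you gloss over with the phrase ``handles cancelling'' and ``after blowing down the cancelling pairs.'' You need to explain \emph{why} the upper boundary is $n$-surgery on a single knot in $S^3$, rather than surgery on a link or something worse.

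Here is the issue. You start with a $0$-framed unlink $\vec{C}$ in a ball and then clasp to realize the matrix $B$ for $\lambda$. Each clasp along a group element $g$ changes the integral linking number by $\pm 1$, so after all the clasping the $\Z$-linking matrix of $\vec{C}$ in $S^3$ is $\epsilon(B)$. For the components of $\vec{C}$ to Kirby-cancel in $S^3$ (leaving only the modified $K'$), you need $\epsilon(B)$ to represent the trivial element of $L_4(\Z)$, i.e., to be stably hyperbolic. There is no reason this holds for an arbitrary $\theta$: the obstruction is exactly $\sign(\theta)$. The paper handles this by first connect-summing with closed spin $4$-manifolds to kill $\sign(\theta)$: topologically one has Freedman's $E_8$-manifold of signature $8$ (and $\sign(\theta)$ is always divisible by $8$ since $\epsilon(\theta)$ is even unimodular), while smoothly one only has manifolds of signature $16$ by Rokhlin. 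After this reduction one chooses a basis with $\epsilon(\lambda)$ a sum of hyperbolics, and \emph{then} the clasped handles cancel in $S^3$. This single reduction step is what simultaneously makes the cancellation work and explains the smooth-versus-topological dichotomy.

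Your proposed treatment of the smooth case via the Kirby--Siebenmann invariant is therefore misdirected: there is no pre-existing cobordism $(Z,F)$ to smooth, and the formula you quote for $\mathrm{ks}$ is for closed manifolds, not cobordisms. The dichotomy enters earlier and more simply, in the choice of closed spin manifold used to kill $\sign(\theta)$. (A minor point: you do not need anything like Lemma~\ref{L:K-unknot} to route the clasps---any finite collection of elements of $\pi_1(S^3_n(K))$ is represented by an unlink in $S^3\setminus K$ just by doing crossing changes missing $K$.)
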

\begin{proof}
We remark first that any finite collection of elements of $\pi_1(S^3_n(K))$ may be represented by an unlink. For one can start with any link $L$ in $S^3- K$ representing those group elements, and then do crossing changes in $S^3$, missing $K$, to make $L$ into an unlink in $S^3-K$.

Note that $\epsilon(\theta)$ is an even unimodular form over $\Z$. This follows from~\cite[Theorem 5.2 (iii)]{wall:book}, which depends on the fact that $F$ is a normal map. A more direct argument is that since $F$ is a normal map, $Z$ is in fact a spin manifold, and hence its intersection form $\epsilon(\theta)$ is even.

By the previous remarks the signature of $\epsilon(\theta)$ is divisible by $8$.   Since there exist closed smooth spin manifolds of signature $16$ and closed topological spin manifolds of index $8$, we can change $\sign(\theta)$ by multiples of $8$ or $16$ in the topological (resp. smooth) categories without affecting the boundary.  So in either category, our assumption on the signature means that we may assume that $\sign(\theta)=0$. Since $L_4(\Z)$ is detected by the signature, this means that $\epsilon(\theta)$ is trivial in $L_4(\Z)$.

As described above, $\lambda$ is encoded as a matrix with respect to a basis for a free $\Z[G]$ module $A$.  Note that any integral change of basis for 
$$
A \otimes_{\Z[G]} \Z
$$
lifts to a change of basis for $A$. Since $\epsilon(\theta) = 0 \in L_4(\Z)$, it follows that there is a basis  $\{e_i,\,|\, i=1\ldots 2m\}$ such that $\epsilon(\lambda)$ is represented by a sum of hyperbolic forms
$$
H = 
\begin{pmatrix}
0&1\\1&0
\end{pmatrix}.
$$
Let $B$ be the matrix for $\lambda$ with respect to this basis.

Now we follow the proof of Wall's realization theorem~\cite[Theorem 5.8]{wall:book}, adding handles to $S^3_n(K) \times I$ along a framed link in $S^3_n(K) \times \{1\}$ to create the normal cobordism $Z$. More precisely, start with a $0$-framed unlink $\vec{C}$ in a ball $S^3_n(K)$ whose components $C_i$ correspond to the basis vectors $e_{i}$. As in Lemma~\ref{L:clasp}, for $i\neq j$ we introduce clasps between the circles $C_i$ and $C_j$ so that the twisted linking number (measured in $\Z[G]$) between those circles becomes $B_{ij}$.  Similarly, introduce self-clasps into $C_i$ along elements of $\pi_1(S^3_n(K))$ so that the self-intersection form will be given by $\mu$.  When handles are added along the components of the resulting $0$-framed link, the intersection form of the resulting $4$-manifold will be given by $\lambda$. 

It follows that the upper boundary of $Z$, which results from doing surgery along $\vec{C}$, is $n$-framed surgery on the knot $K'$ obtained by cancelling all of the handles (viewed as attached to $C$ in $S^3$).
\end{proof}
\begin{remark}
In the case of interest, we let $Y=Q_m$, and apply Proposition~\ref{P:deg} to obtain the knot $K$, and then Lemma~\ref{L:obstruction} to change it into $K'$.  In principle, one could trace through their proofs to determine the knots $K$ and $K'$, respectively. In the special case $p=1$, the group $L_4(\Z[G_{-7}]) \cong \Z^6$, detected by multisignatures, which are computable. So perhaps in this case, the knot $K'$ could be drawn. 
\end{remark}
\section{Proof of Theorem~\ref{thm:spine}}
Putting together Propositions~\ref{P:deg} and \ref{P:normal}, we find a knot $K_p$, a $\Z[G_m]$-homology equivalence $f: S^3_4(K_p) \to Q_m$, and a normal cobordism $(Z,F)$ from $f$ to the identity of $Q_m$.  Making use of Lemma~\ref{L:obstruction}, we find another knot $k_p$, and a normal cobordism $(Z',F')$ (possibly non-smoothable) from $f$ to a $\Z[G_m]$-homology equivalence $f': S^3_4(k_p) \to Q_m$ such that the surgery obstruction $\theta(F')$ is the negative of $\theta(F)$.  Gluing these two together gives a normal cobordism from $f'$ to  the identity of $Q_m$ with trivial surgery obstruction.

Since $\pi_1(Q_m)$ is finite, Freedman's theorem~\cite{freedman-quinn} says that surgery on 
$$
Z \cup_{S^3_4(K_p)} Z'
$$
can be done to get a homotopy equivalence from a new $4$-manifold $V_p$ to $Q_m \times I$.  

Now recall that Levine-Lidman's manifold is denoted $W_p$, and its boundary is $Q_m \conn  -Y_p	$ where $Y_p$ is a homology sphere.  In the topological category, $Y_p$ bounds a contractible manifold $U$, and we take the boundary connected sum of $V_p$ with $U$ manifold to get a homology cobordism between $S_4^3(k_p)$ and $\partial W_p$.  By the $d$-invariant calculation in~\cite{levine-lidman:spineless} this homology cobordism cannot be smooth, unless $p \in\{0,-1,-2\}$.
Let
$$
W'_p  = X_4(k_p) \cup_{S_4^3(k_p)} (V_p \mathop\natural U)
$$
and note that $\partial W'_p=Q_m \conn  -Y_p	$.

By construction, $W'_p$ has a topological spine, so Theorem~\ref{thm:spine} will be proved if we can identify it with $W_p$.\\[1ex]
{\bf Claim:} $W'_p$ is homeomorphic to $W_p$.\\[1ex]
To prove the claim, we use the work of Boyer~\cite{boyer:boundary}, which extends Freedman's classification of simply-connected topological manifolds to the setting of manifolds with boundary. 

By construction, the inclusion of $S_4^3(k_p)$ into $V_p$ induces a surjection on the fundamental group. Since $X_4(k_p)$ is simply connected, it follows that $W'_p$ is simply connected as well.  Note further that $W'_p$ is spin, since its intersection form is even. The  classification for general boundaries is rather complicated, but in the present case the answer turns out to be quite simple. For an oriented $3$-manifold $M$, Boyer constructs a map $c^t_L(M)$ from the set of homeomorphism classes of simply connected  $4$-manifolds with boundary $M$ and intersection form $L$ to a certain coset space $B^t_L(M)$ of the automorphisms of the linking form of $M$.  When $L$ is even, and $M$ is a rational homology sphere, this map is an injection.  

The fact that $H_1(Q_m \conn  -Y_p) \cong \Z_4$ implies~\cite[Proposition 0.6]{boyer:boundary} that $B^t_L(Q_m \conn  -Y_p)$ has exactly one element. Hence $W'_p$ is homeomorphic to $W_p$ and the proof of Theorem~\ref{thm:spine} is complete. \qed
\begin{remark}
It is reasonable to wonder if the other manifolds $W_{k,m}$ constructed in~\cite{levine-lidman:spineless} have topological spines. These manifolds have intersection form $\langle k^2\rangle$ for $k>2$ and $|m| \gg 0$, and boundaries of the form $Q_{k,m}\conn  Y$. Here $Q_{k,m}$ is surgery on a link similar to the one in Figure~\ref{F:Qmeta2}(b) and $Y$ is a homology sphere. So it seems likely that the portion of the argument that produces a knot and a degree homology equivalence on the result of surgery on that knot to $Q_{k,m}$ will still work. However, for our method to succeed, we need an honest surgery problem whose restriction to the boundary is a homology equivalence with coefficients in $\pi_1(Q_{k,m})$, and not just a homology surgery problem. The fundamental group of $Q_{k,m}$ is large and it is not clear what should be the fundamental group of a surgery problem that might be constructed to produce a homology cobordism.   
\end{remark}

\end{document}